\numberwithin{equation}{section}
\numberwithin{algorithm}{section}
\theoremstyle{plain}
\newtheorem{theorem}{Theorem}[section]
\newtheorem{proposition}[theorem]{Proposition}
\newtheorem{lemma}[theorem]{Lemma}
\theoremstyle{definition}
\newtheorem{definition}[theorem]{Definition}
\theoremstyle{remark}
\DeclareMathOperator{\diag}{diag}
\newcommand{\tp}{{\scriptscriptstyle\mathsf{T}}}
\begin{document}
\title{Geometric distance between positive definite matrices of different dimensions}
\author[L.-H.~Lim]{Lek-Heng~Lim}
\address{Computational and Applied Mathematics Initiative, Department of Statistics,
University of Chicago, Chicago, IL 60637, USA}
\email{lekheng@galton.uchicago.edu}

\author[R.~Sepulchre]{Rodolphe Sepulchre}
\address{Department of Engineering,
University of Cambridge, Cambridge, CB2 1PZ, UK}
\email{r.sepulchre@eng.cam.ac.uk}

\author[K.~Ye]{Ke~Ye}
\address{KLMM, Academy of Mathematics and Systems Science, Chinese Academy of Sciences, Beijing, 100190, China}
\email{keyk@amss.ac.cn}

\begin{abstract}
We show how the Riemannian distance on $\mathbb{S}^n_{++}$, the cone of $n\times n$ real symmetric or complex Hermitian positive definite matrices, may be used to naturally define a distance between two such matrices of different dimensions. Given that $\mathbb{S}^n_{++}$ also parameterizes $n$-dimensional ellipsoids, and inner products on $\mathbb{R}^n$,  $n \times n$ covariance matrices of nondegenerate probability distributions, this gives us a natural way to define a geometric distance between a pair of such objects of different dimensions.
\end{abstract}

\subjclass[2010]{15B48, 15A18, 51K99, 53C25}

\keywords{Riemannian distance, positive definite matrices, covariance matrices, ellipsoids}

\maketitle

\section{Introduction}\label{sec:intro}

It is well-known that the cone of real symmetric positive definite or complex Hermitian positive definite matrices $\mathbb{S}^n_{++}$ has a natural Riemannian metric that gives it a \emph{Riemannian distance} 
\begin{equation}\label{eq:delta2}
\delta_2: \mathbb{S}^n_{++} \times \mathbb{S}^n_{++} \to \mathbb{R}_+,\quad \delta_2 (A,B) =\Bigl[ \sum\nolimits_{j=1}^n \log^2(\lambda_j(A^{-1}B))\Bigr]^{1/2}.
\end{equation}
The Riemannian metric and distance endow $\mathbb{S}^n_{++}$ with rich geometric properties: in addition to being a Riemannian manifold, it is a symmetric space, a Bruhat--Tits space, a CAT(0) space, and a metric space of nonpositive curvature \cite[Chapter~6]{Bhatia}.

The Riemannian distance $\delta_2$  is arguably the most natural and useful distance on the positive definite cone $\mathbb{S}^n_{++}$ \cite{BonSep}. It may be thought as a generalization to $\mathbb{S}^n_{++}$ the geometric distance between two positive numbers $\lvert \log(a/b) \rvert$ \cite{BonSep}. It is invariant under any \emph{congruence} transformation of the data: $\delta_2(XAX^\tp, XBX^\tp) = \delta_2(A,B)$  for any invertible matrix $X$. Because a positive definite matrix is congruent to identity, the distance is entirely characterized by the simple formula $\delta (A,I)= \lVert \log A \rVert_F$. It is also invariant under \emph{inversion}, $\delta_2(A^{-1}, B^{-1}) = \delta_2(A,B)$, which again generalizes an important  property of the geometric distance between positive scalars, as well as any \emph{similarity} transformation: $\delta_2(XAX^{-1}, XBX^{-1}) = \delta_2(A,B)$  for any invertible matrix $X$. For comparison, all matrix norms are at best invariant under orthogonal or unitary transformations (e.g., Frobenius, spectral, nuclear, Schatten, Ky Fan norms) or otherwise only permutations and scaling (e.g., operator $p$-norms, H\"older $p$-norms, where $p \ne 2$).

From a practical perspective, $\delta_2$ underlies important  applications in computer vision \cite{Pen1},  medical imaging \cite{FleJos, MoaZer}, radar signal processing \cite{Bar}, statistical inference \cite{Pen2}, among other areas. In optimization, $\delta_2$ has been shown \cite{NesTod} to be equivalent to the metric defined by the self-concordant log barrier in semidefinite programming, i.e., $\log \det : \mathbb{S}^n_{++} \to \mathbb{R}$. In statistics, it has been shown \cite{Smith} to be equivalent to the Fisher information metric for Gaussian covariance matrix estimation problems. In numerical linear algebra, $\delta_2$ gives rise to the matrix geometric mean \cite{LawLim}, a topic that has been thoroughly studied and has many applications of its own.

We will show how $\delta_2$ naturally  gives a notion of geometric distance $\delta^+_2$ between positive definite matrices of \emph{different} dimensions, that is, we will define $\delta_2^+(A, B)$ for $A \in  \mathbb{S}^m_{++} $ and $B \in  \mathbb{S}^n_{++} $ where $m \ne n$.
Because of the ubiquity of positive definite matrices, this distance immediately extends to other objects. For example, real symmetric positive definite matrices $A \in \mathbb{S}^n_{++} $ are in one-to-one correspondence with:
\begin{enumerate}[\upshape (i)]
\item\label{it:ell} ellipsoids centered at the origin in $\mathbb{R}^n$,
\[
\mathcal{E}_A \coloneqq \{ x\in \mathbb{R} : x^\tp A x \le 1 \};
\]

\item inner products on $\mathbb{R}^n$,
\[
\langle \, \cdot, \cdot \, \rangle_A : \mathbb{R}^n \times \mathbb{R}^n \to \mathbb{R}, \quad ( x, y) \mapsto x^\tp A y;
\]

\item covariances of nondegenerate random variables $X = (X_1,\dots, X_n) : \Omega \to \mathbb{R}^n$,
\[
A = \operatorname{Cov}(X)  =  E[(X-\mu)(X-\mu)^\tp];
\]
\end{enumerate}
as well as other objects such as diffusion tensors, mean-centered Gaussians, sums-of-squares polynomials,  etc. In other words, our new notion of distance gives a way to measure separation between ellipsoids, inner products, covariances, etc, of different dimensions. Note that we may replace $\mathbb{R}$ by $\mathbb{C}$ and $x^\tp$ by $x^*$, so these results also carry over to $\mathbb{C}$.

In fact, it is easiest to describe our approach in terms of ellipsoids, by virtue of \eqref{it:ell}. The result that forms the impetus behind our distance $\delta_2^+$ is the following:
\begin{quote}
\emph{Given an $m$-dimensional ellipsoid $\mathcal{E}_A$ and an $n$-dimensional ellipsoid $\mathcal{E}_B$, say $m \le n$. The distance from $\mathcal{E}_A$ to the set of $m$-dimensional ellipsoids contained in $\mathcal{E}_B$ equals the distance from $\mathcal{E}_B$ to the set of $n$-dimensional ellipsoids containing $\mathcal{E}_A$, where both distances are measured via \eqref{eq:delta2}. Their common value gives a distance between $\mathcal{E}_A$ and $\mathcal{E}_B$ and therefore $A$ and $B$.}
\end{quote}
In addition, we show that this distance has an explicit, readily computable expression.

\subsection*{Notations and terminologies}

All results in this article will apply to $\mathbb{R}$ and $\mathbb{C}$ alike. To avoid verbosity, we adopt the convention that the term `Hermitian' will cover both `complex Hermitian' and `real symmetric.'  $\mathbb{F}$ will denote either $\mathbb{R}$ or $\mathbb{C}$. For $X \in \mathbb{F}^{m \times n}$,  $X^*$ will mean the transpose of $X$ if $\mathbb{F} = \mathbb{R}$ and the conjugate transpose of $X$ if $\mathbb{F} = \mathbb{C}$. 

We will adopt notations in \cite{Boyd2004}. Let $n$ be a positive integer. $\mathbb{S}^n$ will denote  the vector space of $n \times n$ Hermitian matrices, $\mathbb{S}^n_{+}$ the closed cone of of $n\times n$ Hermitian positive semidefinite matrices, and $\mathbb{S}^n_{++}$ the open cone of $n\times n$ Hermitian positive definite matrices. $\preceq$ will denote the partial order on $\mathbb{S}^n_{+}$ (and thus also on its subset  $\mathbb{S}^n_{++}$) defined by
\[
A\preceq B\qquad \text{if and only if}\qquad B-A \in \mathbb{S}^n_{+}.
\]
For brevity, positive (semi)definite will henceforth mean\footnote{Recall that while a complex positive (semi)definite matrix is necessarily Hermitian, a real positive (semi)definite matrix does not need to be symmetric.} Hermitian positive (semi)definite.

\section{Positive definite matrices}\label{sec:pos}

For the reader's easy reference, we will review some basic properties of positive definite matrices that we will need later: simultaneous diagonalizability, Cauchy interlacing, and majorization.

A pair of Hermitian matrices, one positive definite and the other nonsingular, may be simultaneously diagonalized. We state a version of this well-known result below \cite[Theorem~12.19]{Laub2005}.
\begin{theorem}[Simultaneous diagonalization]\label{thm:simultaneous diagonalization}
Let $A\in \mathbb{S}^n_{++}$ and $B\in \mathbb{S}^n$. Then there exists a nonsingular $X \in \mathbb{F}^{n \times n}$ such that 
\[
X A X^*  = I_n,\qquad X B X^*  = D,
\] 
where $I_n$ is the $n\times n$ identity matrix and $D$ is the diagonal matrix whose diagonal entries are eigenvalues of $A^{-1}B$.
\end{theorem}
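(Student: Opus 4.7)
My plan is to reduce the simultaneous diagonalization problem to the ordinary diagonalization of a single Hermitian matrix, via the positive definite square root of $A$, then invoke the spectral theorem, and finally reconcile the congruence $X A X^* = I$ with a similarity that produces the eigenvalues of $A^{-1} B$.

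First, since $A \in \mathbb{S}^n_{++}$, it admits a unique Hermitian positive definite square root $A^{1/2} \in \mathbb{S}^n_{++}$ with inverse $A^{-1/2} \in \mathbb{S}^n_{++}$. Set $C \coloneqq A^{-1/2} B A^{-1/2}$, which is Hermitian as a congruence of the Hermitian matrix $B$ by the Hermitian $A^{-1/2}$. By the spectral theorem there exist a unitary $U \in \mathbb{F}^{n \times n}$ and a real diagonal $D$ with $U^* C U = D$. I then set $X \coloneqq U^* A^{-1/2}$, which is nonsingular as a product of nonsingular matrices, and a direct computation gives
\[
X A X^* = U^* A^{-1/2} A A^{-1/2} U = U^* U = I_n, \qquad X B X^* = U^* A^{-1/2} B A^{-1/2} U = U^* C U = D,
\]
establishing the simultaneous diagonalization.

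It remains to identify the diagonal entries of $D$ as the eigenvalues of $A^{-1} B$. The crucial observation is that while $X(\cdot)X^*$ is a congruence, one also has the similarity
\[
A^{-1} B = A^{-1/2} \bigl( A^{-1/2} B A^{-1/2} \bigr) A^{1/2} = A^{-1/2} C A^{1/2},
\]
so $A^{-1} B$ and $C$ share the same spectrum; combined with the unitary similarity $U^* C U = D$, this shows that the diagonal entries of $D$ are precisely the eigenvalues of $A^{-1} B$. There is no serious obstacle in this proof; the only subtle point is to keep straight that $A$ transforms by congruence (because it encodes a quadratic form) whereas the eigenvalues of $A^{-1} B$ live under similarity, with the bridge being the observation that $A^{-1} B$ is similar to the Hermitian matrix $C$.
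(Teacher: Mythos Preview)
Your proof is correct and is the standard argument for this classical result. The paper itself does not supply a proof of this theorem; it simply states it as a well-known fact and cites \cite[Theorem~12.19]{Laub2005}, so there is nothing to compare against beyond noting that your construction via $X = U^* A^{-1/2}$ is exactly the textbook route.
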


As usual, we will order the eigenvalues of $X \in \mathbb{S}^n_{++}$ nonincreasingly:
\[
\lambda_1(X) \le \lambda_2(X) \le \cdots \le \lambda_n(X).
\]
The next two standard results may be found as \cite[Theorem~4.3.28, Corollary~7.7.4]{HJ1990}.
\begin{theorem}[Cauchy interlacing inequalities]\label{thm:interlacing}
Let $m \le n$ and $A \in \mathbb{S}^{n}$. If we partition $A$ into
\begin{gather*}
A = \begin{bmatrix}
A_1 & A_2 \\
A^*_2 & A_3
\end{bmatrix}, \quad A_1 \in \mathbb{S}^m, \quad A_2 \in \mathbb{F}^{m \times (n-m)}, \quad A_3\in \mathbb{S}^{n-m},
\shortintertext{then}
\lambda_j (A) \le \lambda_j (A_1) \le \lambda_{j + n -m} (A),\quad j=1,\dots, m.
\end{gather*}
\end{theorem}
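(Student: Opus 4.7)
The plan is to invoke the Courant--Fischer min-max characterization of eigenvalues: for $A\in\mathbb{S}^n$ with eigenvalues $\lambda_1(A)\le\cdots\le\lambda_n(A)$, one has
\[
\lambda_j(A) = \min_{\substack{S\subset\mathbb{F}^n \\ \dim S = j}} \max_{\substack{x\in S \\ x\ne 0}} \frac{x^*Ax}{x^*x} = \max_{\substack{T\subset\mathbb{F}^n \\ \dim T = n-j+1}} \min_{\substack{x\in T \\ x\ne 0}} \frac{x^*Ax}{x^*x}.
\]
The first preparatory observation is that $A_1$ is nothing other than the restriction of the quadratic form $x\mapsto x^*Ax$ to the $m$-dimensional subspace $V = \mathbb{F}^m \oplus\{0\} \subset \mathbb{F}^n$: for $x=(y,0)\in V$ we have $x^*Ax = y^*A_1 y$ and $x^*x = y^*y$. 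Consequently the Rayleigh quotients of $A_1$ over subspaces of $\mathbb{F}^m$ coincide with Rayleigh quotients of $A$ over subspaces of $V$, and Courant--Fischer applied to $A_1$ can be rewritten as optimizations ranging over subspaces of $V$ rather than of $\mathbb{F}^n$.

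For the lower bound $\lambda_j(A)\le\lambda_j(A_1)$, I would use the min-max form: every $j$-dimensional subspace of $V$ is also a $j$-dimensional subspace of $\mathbb{F}^n$, so the minimum computing $\lambda_j(A_1)$ ranges over a \emph{subset} of the subspaces computing $\lambda_j(A)$, and a restricted minimum is always at least the unrestricted one. For the upper bound $\lambda_j(A_1)\le\lambda_{j+n-m}(A)$, I would use the max-min form. The key bookkeeping is that $\lambda_j(A_1)$ is the max over $(m-j+1)$-dimensional subspaces of $V$, while $\lambda_{j+n-m}(A)$ is the max over $(n-(j+n-m)+1)=(m-j+1)$-dimensional subspaces of $\mathbb{F}^n$---the codimensions match. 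Since we are now maximizing and the subspace family for $A_1$ sits inside the one for $A$, the restricted max is $\le$ the unrestricted max.

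As a shortcut, once the lower bound is established, the upper bound follows by applying the lower bound to $-A$ (whose $j$th eigenvalue is $-\lambda_{n-j+1}(A)$) and the corresponding submatrix $-A_1$, then reindexing. The only real obstacle in the argument is keeping the dimension/index arithmetic straight when switching between the min-max and max-min forms; the geometric content---that restricting candidate subspaces to lie in a fixed $m$-dimensional subspace can only raise a min and only lower a max---does all the work.
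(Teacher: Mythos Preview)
Your argument is correct: the Courant--Fischer min--max characterization, together with the observation that $A_1$ is the compression of $A$ to $V=\mathbb{F}^m\oplus\{0\}$, yields both inequalities exactly as you describe, and your index bookkeeping (in particular $n-(j+n-m)+1=m-j+1$) and the $A\mapsto -A$ shortcut are both sound.

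There is nothing to compare against in the paper itself: the theorem is quoted as a standard background fact with a citation to Horn and Johnson \cite[Theorem~4.3.28]{HJ1990} and is not proved. Your proof is essentially the textbook argument given there.
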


\begin{proposition}[Majorization]\label{prop:partial order v.s. eigenvalues}
If $A, B \in \mathbb{S}^n_{++}$ and $A \preceq B$, then $\lambda_j(A) \le \lambda_j(B)$, $ j=1,\dots, n$.
\end{proposition}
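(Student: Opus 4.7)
The plan is to invoke the Courant--Fischer min-max characterization of eigenvalues and exploit the fact that the partial order $\preceq$ is precisely pointwise domination of the associated quadratic forms. With the convention $\lambda_1(M) \le \cdots \le \lambda_n(M)$ used in the paper, Courant--Fischer states that for any Hermitian $M \in \mathbb{S}^n$,
\[
\lambda_j(M) \;=\; \min_{\substack{V \subseteq \mathbb{F}^n \\ \dim V = j}} \; \max_{\substack{x \in V \\ x \ne 0}} \frac{x^* M x}{x^* x}, \qquad j = 1, \dots, n.
\]

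Now $A \preceq B$ means exactly that $B - A \in \mathbb{S}^n_+$, i.e.\ $x^*(B-A)x \ge 0$ for every $x \in \mathbb{F}^n$, equivalently $x^* A x \le x^* B x$ for all $x$. Fix $j$ and any $j$-dimensional subspace $V \subseteq \mathbb{F}^n$. Taking maxima over the unit sphere in $V$, the pointwise inequality of Rayleigh quotients yields
\[
\max_{x \in V,\, x \ne 0} \frac{x^* A x}{x^* x} \;\le\; \max_{x \in V,\, x \ne 0} \frac{x^* B x}{x^* x}.
\]
Taking the minimum of both sides over all $V$ with $\dim V = j$ and invoking Courant--Fischer gives $\lambda_j(A) \le \lambda_j(B)$.

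There is no substantive obstacle here: the argument is a direct application of the variational characterization. The only point requiring a bit of care is the ordering convention; with $\lambda_1 \le \cdots \le \lambda_n$, one must use the ``$\min_{\dim V = j} \max$'' form of Courant--Fischer (rather than the ``$\max_{\dim V = j} \min$'' form, which would correspond to the opposite ordering). Note also that positive definiteness of $A, B$ is not essential for this monotonicity conclusion; the same argument goes through for any pair of Hermitian matrices with $A \preceq B$, so the proposition is really a special case of Weyl's monotonicity principle.
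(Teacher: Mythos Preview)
Your proof is correct. The paper itself does not prove this proposition; it simply cites it as a standard result (Corollary~7.7.4 in Horn--Johnson), so there is no ``paper's proof'' to compare against. Your argument via the Courant--Fischer min--max principle is exactly the standard route to this fact, and your observation that positive definiteness is inessential---the conclusion holds for arbitrary Hermitian $A \preceq B$ and is indeed a case of Weyl's monotonicity theorem---is accurate.
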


\section{Containment of ellipsoids of different dimensions}

It helps to picture our construction with a concrete geometric object in mind and for this purpose we will exploit the one-to-one correspondence between positive definite matrices and ellipsoids mentioned in Section~\ref{sec:intro}. For $A\in \mathbb{S}^n_{++}$, the $n$-dimensional \emph{ellipsoid} $\mathcal{E}_A$ centered at the origin is
\[
\mathcal{E}_A \coloneqq \lbrace x\in \mathbb{F}^n: x^* Ax \le 1 \rbrace.
\]
All ellipsoids in this article will be centered at the origin and henceforth we will drop the `centered at the origin' for brevity.
There is a simple equivalence between containment of ellipsoids and the partial order on positive definite matrices.
\begin{lemma}\label{lem:ellipsoid-matrix1}
Let $A,B\in \mathbb{S}^n_{++}$. Then $\mathcal{E}_A \subseteq \mathcal{E}_B$ if and only if $B\preceq A$.
\end{lemma}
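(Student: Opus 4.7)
The plan is to prove both directions by working with the defining quadratic form $x \mapsto x^*Mx$ and exploiting the fact that $\mathcal{E}_M$ is simply the sublevel set $\{x : x^*Mx \le 1\}$. Since the partial order $\preceq$ is itself defined pointwise on quadratic forms, the equivalence should drop out with very little work; there is no real obstacle here.

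For the easy direction, I would assume $B \preceq A$, which by definition means $A - B \in \mathbb{S}^n_+$, and therefore $x^*(A-B)x \ge 0$, i.e., $x^*Bx \le x^*Ax$ for every $x \in \mathbb{F}^n$. In particular, any $x$ with $x^*Ax \le 1$ automatically satisfies $x^*Bx \le 1$, so $\mathcal{E}_A \subseteq \mathcal{E}_B$.

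For the converse, I would argue by a homogeneity/scaling trick. Assume $\mathcal{E}_A \subseteq \mathcal{E}_B$ and pick any nonzero $x \in \mathbb{F}^n$. Since $A \in \mathbb{S}^n_{++}$, the scalar $x^*Ax$ is strictly positive, so we may set $y \coloneqq x/\sqrt{x^*Ax}$. By construction $y^*Ay = 1$, hence $y \in \mathcal{E}_A \subseteq \mathcal{E}_B$, which yields $y^*By \le 1$. Multiplying both sides by $x^*Ax$ gives $x^*Bx \le x^*Ax$. Since this holds for every $x$ (the case $x = 0$ being trivial), we conclude that $A - B$ is positive semidefinite, i.e., $B \preceq A$.

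The proof uses only the definitions of $\mathcal{E}_M$ and of $\preceq$, together with positive definiteness of $A$ to ensure the normalization $y = x/\sqrt{x^*Ax}$ is well defined; none of the heavier machinery (simultaneous diagonalization, Cauchy interlacing, majorization) from the preceding section is needed here, although it will presumably enter later when comparing matrices of different sizes.
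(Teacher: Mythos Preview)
Your proof is correct and follows essentially the same argument as the paper: both directions are handled exactly as you describe, with the key step being the normalization $y = x/\sqrt{x^*Ax}$ to pass from the sublevel-set inclusion to the pointwise inequality of quadratic forms. Your write-up is in fact slightly more careful than the paper's (you note explicitly why positive definiteness of $A$ is needed and handle $x=0$), but there is no substantive difference in approach.
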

\begin{proof}
If $\mathcal{E}_A \subseteq \mathcal{E}_B$, then for each $x\in \mathbb{F}^n$ satisfying 
\begin{equation}\label{eq:ellipsoid-matrix1}
x^* Ax \le 1
\end{equation}
we also have $x^* Bx  \le 1$. Thus we have $y^* B y  \le y^* A y$ for any $y\in \mathbb{F}^n$ since $x = y/\sqrt{y^* Ay}$ satisfies \eqref{eq:ellipsoid-matrix1}. Conversely, if $B\preceq A$, then whenever $x$ satisfies \eqref{eq:ellipsoid-matrix1}, we have 
$x^* Bx  \le x^* Ax  \le 1$.
\end{proof}  
Lemma~\ref{lem:ellipsoid-matrix1} gives the one-to-one correspondence we have alluded to: $\mathcal{E}_A = \mathcal{E}_B$ if and only if $A = B \in \mathbb{S}^n_{++}$.

We extend this to the containment of ellipsoids of different dimensions.
Let $m\le n$ be positive integers and $A \in \mathbb{S}^m_{++}$, $B \in \mathbb{S}^n_{++}$. Consider the embedding
\[
\iota_{m,n}: \mathbb{F}^m \to \mathbb{F}^n,\quad (x_1,\dots, x_m) \mapsto (x_1,\dots, x_m,0,\dots, 0).
\]
Then we have 
\[
\iota_{m,n} (\mathcal{E}_A) = \lbrace (x,0) \in \mathbb{F}^n:  x^* A x \le 1 \rbrace,
\]
where $x \in \mathbb{F}^m$ and $0 \in \mathbb{F}^{n -m}$ is the zero vector.
Let $B_{11}$ be the upper left $m\times m$ principal submatrix of $B\in \mathbb{S}^n_{++}$, i.e., $B = \begin{bsmallmatrix}
B_{11} & B_{12} \\
B_{12}^*  & B_{22}
\end{bsmallmatrix}$ for matrices $B_{11},B_{12}, B_{22}$ of appropriate dimensions. Then the same argument used in the proof of Lemma~\ref{lem:ellipsoid-matrix1} gives the following.
\begin{lemma}\label{lem:ellipsoids-matrix2}
Let $m\le n$  and $A \in \mathbb{S}^m_{++}$, $B \in \mathbb{S}^n_{++}$. Then
$\iota_{m,n}(\mathcal{E}_A) \subseteq \mathcal{E}_B$ if and only if $B_{11} \preceq A$.
\end{lemma}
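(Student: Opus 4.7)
The plan is to reduce everything to the $m$-dimensional situation of Lemma~\ref{lem:ellipsoid-matrix1} by exploiting the block structure of $B$. The first observation is that for any $x \in \mathbb{F}^m$, writing $\iota_{m,n}(x) = (x,0) \in \mathbb{F}^n$, a direct block multiplication gives
\[
\iota_{m,n}(x)^* B \, \iota_{m,n}(x) = \begin{bmatrix} x^* & 0 \end{bmatrix}\begin{bmatrix} B_{11} & B_{12} \\ B_{12}^* & B_{22} \end{bmatrix}\begin{bmatrix} x \\ 0 \end{bmatrix} = x^* B_{11} x,
\]
so the embedded ellipsoid $\iota_{m,n}(\mathcal{E}_A)$ lies inside $\mathcal{E}_B$ if and only if every $x \in \mathbb{F}^m$ with $x^* A x \le 1$ also satisfies $x^* B_{11} x \le 1$. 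This is exactly the setup of Lemma~\ref{lem:ellipsoid-matrix1} but with $B$ replaced by $B_{11}$.

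Next I would run the scaling argument of Lemma~\ref{lem:ellipsoid-matrix1} verbatim. For the forward direction, assume $\iota_{m,n}(\mathcal{E}_A) \subseteq \mathcal{E}_B$ and let $y \in \mathbb{F}^m$ be arbitrary and nonzero; since $A \in \mathbb{S}^m_{++}$ we have $y^* A y > 0$, so $x \coloneqq y/\sqrt{y^* A y}$ satisfies $x^* A x = 1$. Applying the hypothesis at $x$ and clearing the scalar $y^* A y$ yields $y^* B_{11} y \le y^* A y$, and this inequality (trivially true at $y=0$) gives $B_{11} \preceq A$. For the reverse direction, if $B_{11} \preceq A$ then any $x$ with $x^* A x \le 1$ satisfies $x^* B_{11} x \le x^* A x \le 1$.

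There is no real obstacle here beyond the block-matrix observation; once one sees that the quadratic form of $B$ on the image of $\iota_{m,n}$ is controlled entirely by the upper-left principal submatrix $B_{11}$, the proof of Lemma~\ref{lem:ellipsoid-matrix1} transfers without change. The only small point worth checking is that $B_{11}$ itself is positive definite (a consequence of $B \in \mathbb{S}^n_{++}$), which justifies, though is not strictly needed for, the use of Proposition~\ref{prop:partial order v.s. eigenvalues} later on when this lemma is applied.
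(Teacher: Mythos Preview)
Your proposal is correct and is exactly the argument the paper has in mind: the paper does not even write out a separate proof for this lemma, stating only that ``the same argument used in the proof of Lemma~\ref{lem:ellipsoid-matrix1}'' applies, which is precisely your reduction via the block-multiplication identity $\iota_{m,n}(x)^* B\,\iota_{m,n}(x)=x^* B_{11} x$ followed by the scaling argument.
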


\section{Geometric distance between ellipsoids of different dimensions}

Our method of defining a geometric distance $\delta_2^+$ for pairs of positive definite matrices of different dimensions is inspired by a similar (at least in spirit) extension of the  distance on a Grassmannian to subspaces of different dimensions proposed in \cite{schubert}. The following convex sets will play the role of the Schubert varieties in \cite{schubert}.

\begin{definition}
Let $m \le n$. For any $A\in \mathbb{S}^m_{++}$, we define the \emph{convex set of $n$-dimensional ellipsoids containing $\mathcal{E}_A$} to be
\begin{equation}\label{eq:omega+}
\Omega_+ (A) \coloneqq \biggl\{ G = \begin{bmatrix}
G_{11} & G_{12} \\
G_{12}^*  & G_{22}
\end{bmatrix}\in \mathbb{S}^n_{++} : G_{11} \preceq A  \biggr\}.
\end{equation}
For any $B\in \mathbb{S}^n_{++}$, we define the \emph{convex set of $m$-dimensional ellipsoids contained in $\mathcal{E}_B$} to be
\begin{equation}\label{eq:omega-}
\Omega_{-} (B) \coloneqq \{ H\in \mathbb{S}^m_{++}: B_{11} \preceq H \},
\end{equation}
where $B_{11}$ is the upper left $m \times m$ principal submatrix of $B$. 
\end{definition}

Lemma~\ref{lem:ellipsoids-matrix2} provides justification for the names: more precisely, $\Omega_+(A)$ parametrizes all $n$-dimensional ellipsoids containing $\iota_{m,n} (\mathcal{E}_A)$ whereas  $\Omega_{-}(B) $ parametrizes all $m$-dimensional ellipsoids contained in $\mathcal{E}_{B_{11}}$. 

Given  $A \in \mathbb{S}^m_{++}$ and $B \in \mathbb{S}^n_{++}$, a natural way to  define the distance between $A$ and $B$ is to define it as the distance from  $A$ to the set $\Omega_-(B)$, i.e., 
\begin{equation}\label{eq:dAB-}
\delta_2 \bigl(A,\Omega_-(B)\bigr) \coloneqq \inf_{H\in \Omega_-(B)}  \delta_2(A,H) = \inf_{H\in \Omega_-(B)}  \Bigl[ \sum\nolimits_{j=1}^m \log^2 \lambda_j (AH^{-1})\Bigr]^{1/2};
\end{equation}
but another equally natural way is to define it as the distance from  $B\in \mathbb{S}^n_{++}$ to the set $\Omega_+(A)$, i.e.,
\begin{equation}\label{eq:dAB+}
\delta_2 \bigl(B, \Omega_+(A)\bigr) \coloneqq \inf_{G\in \Omega_+(A)}  \delta_2(G,B) =\inf_{G\in \Omega_+(A)} \Bigl[ \sum\nolimits_{j=1}^n \log^2 \lambda_j(G B^{-1})\Bigr]^{1/2}.
\end{equation}
We will show that
\[
\delta_2 \bigl(A,\Omega_-(B)\bigr) = \delta_2 \bigl(B, \Omega_+(A)\bigr)  
\]
and their common value gives the distance we seek between $A$ and $B$.

Note that $\Omega_+ (A)  \subseteq \mathbb{S}^n_{++}$ and $\Omega_{-} (B) \subseteq \mathbb{S}^m_{++}$,  \eqref{eq:dAB-} is the distance of a point $A$ to a set $\Omega_-(B)$ within the Riemannian manifold $\mathbb{S}^m_{++}$, \eqref{eq:dAB+} is the distance of a point $B$ to a set $\Omega_+(A)$ within the Riemannian manifold $\mathbb{S}^n_{++}$. There is no reason to expect that they are equal but in fact they are --- this is our main result.

\begin{theorem}\label{thm:delta}
Let $m\le n$ be positive integers and let $A \in \mathbb{S}^m_{++}$ and $B \in \mathbb{S}^n_{++}$. Let $B_{11}$ be the upper left $m\times m$ principal submatrix of $B$. Then
\begin{equation}\label{eq:equal}
\delta_2 \bigl(A,\Omega_-(B)\bigr) = \delta_2 \bigl(B, \Omega_+(A)\bigr) 
\end{equation}
and their common value is given by
\begin{equation}\label{eq:delta}
\delta^+_2(A, B) \coloneqq \Bigl[\sum\nolimits_{j=1}^m \min\{0,\log \lambda_j(A^{-1}B_{11})\}^2\Bigr]^{1/2},
\end{equation}
or, alternatively,
\[
\delta^+_2(A, B) =\Bigl[  \sum\nolimits_{j=1}^k \log^2 \lambda_j (A^{-1}B_{11})\Bigr]^{1/2},
\]
where $k$ is such that $\lambda_j (A^{-1}B_{11}) \le 1$ for $j = k+1,\dots,m$.
\end{theorem}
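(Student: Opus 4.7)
My plan is to evaluate the two infima in \eqref{eq:equal} separately and show each equals $\sum_{j=1}^m \max\{0,\log \mu_j\}^2$, where $\mu_j \coloneqq \lambda_j(A^{-1}B_{11})$, which is the same as the formula given in \eqref{eq:delta} (in its alternative form, summing $\log^2 \mu_j$ over those $\mu_j>1$). The starting reduction is simultaneous diagonalization (Theorem~\ref{thm:simultaneous diagonalization}): choose $X_1 \in \GL_m(\mathbb{F})$ with $X_1 A X_1^* = I_m$ and $X_1 B_{11} X_1^* = D \coloneqq \diag(\mu_1,\dots,\mu_m)$, and apply the block-congruence $X = X_1 \oplus I_{n-m}$ to $\mathbb{S}^n_{++}$. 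Since this congruence preserves upper-left $m\times m$ blocks, the sets $\Omega_\pm$ transform equivariantly; by congruence-invariance of $\delta_2$, I may assume $A = I_m$ and $B_{11} = D$.

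For $\delta_2(A,\Omega_-(B))$, the problem becomes $\inf\{\|\log H\|_F : H \in \mathbb{S}^m_{++},\ H \succeq D\}$. Majorization (Proposition~\ref{prop:partial order v.s. eigenvalues}) gives $\lambda_j(H) \ge \mu_j$ for each $j$. Because $t \mapsto (\log t)^2$ is decreasing on $(0,1]$ and increasing on $[1,\infty)$, termwise one has $(\log \lambda_j(H))^2 \ge (\log \mu_j)^2$ when $\mu_j \ge 1$ and $(\log \lambda_j(H))^2 \ge 0$ when $\mu_j < 1$. Summing yields $\|\log H\|_F^2 \ge \sum_j \max\{0,\log\mu_j\}^2$, and this bound is attained by the explicit $H^\star = \diag(\max\{1,\mu_j\})$, which manifestly satisfies $H^\star \succeq D$.

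For $\delta_2(B,\Omega_+(A))$ I perform a second congruence reduction. The unit lower-triangular $X_B = \begin{bsmallmatrix} I_m & 0 \\ L^* & I_{n-m}\end{bsmallmatrix}$ with $L = D^{-1}B_{12}$ block-diagonalizes $B$ via its Schur complement, $X_B^{-1}BX_B^{-*} = \tilde B \coloneqq D \oplus S_B$ with $S_B = B_{22} - B_{12}^* D^{-1} B_{12}$, while leaving the upper-left block of any $G$ unchanged. Normalizing further by $\tilde B^{-1/2}$ recasts the problem as $\inf\{\|\log N\|_F^2 : N \in \mathbb{S}^n_{++},\ N_{11} \preceq D^{-1}\}$. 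The decisive step is the inequality
\[
\|\log N\|_F^2 \;\ge\; \|\log N_{11}\|_F^2 \qquad \text{for every } N \in \mathbb{S}^n_{++},
\]
which I would establish from Cauchy interlacing (Theorem~\ref{thm:interlacing}): for each $j \in \{1,\dots,m\}$ set $\pi(j) = j$ if $\lambda_j(N_{11}) \le 1$ and $\pi(j) = j + n - m$ otherwise. Interlacing together with the $V$-shape of $(\log t)^2$ about $t=1$ yields $(\log \lambda_j(N_{11}))^2 \le (\log \lambda_{\pi(j)}(N))^2$ termwise, while monotonicity of $j \mapsto \lambda_j(N_{11})$ forces $\pi$ to be injective, so summing and discarding the (nonnegative) unused terms gives the stated inequality.

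With this in hand, $\inf\|\log N\|_F^2 \ge \inf\{\|\log N_{11}\|_F^2 : N_{11} \preceq D^{-1}\}$, and the inner infimum is computed exactly as in Direction~1 (applied now to the reversed constraint $\lambda_j(N_{11}) \le \lambda_j(D^{-1})$), again giving $\sum_j \max\{0,\log\mu_j\}^2$; tightness is witnessed by $N^\star = \diag(\min\{1,1/\mu_j\}) \oplus I_{n-m}$, which pulls back through the two congruences to an explicit $G^\star \in \Omega_+(A)$ achieving the bound. I expect the principal obstacle to be the eigenvalue inequality $\|\log N\|_F \ge \|\log N_{11}\|_F$ for principal submatrices: it is not a consequence of standard convex-majorization or Schur--Horn type principles, since $(\log t)^2$ is neither convex nor Schur-convex on $\mathbb{R}_{++}$, and genuinely requires the $V$-shape of $(\log t)^2$ to be coupled to the Cauchy interlacing indices through the explicit injective pairing $\pi$.
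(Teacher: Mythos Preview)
Your proposal is correct and follows essentially the paper's proof: the same congruence reductions (simultaneous diagonalization for $\Omega_-$; Schur-complement block-diagonalization followed by normalization to $B=I_n$, $A=D^{-1}$ for $\Omega_+$), the same explicit minimizers $H^\star$ and $G^\star$, and the same use of majorization and Cauchy interlacing to supply the lower bounds. The only organizational difference is that for $\Omega_+$ you route through the stand-alone inequality $\lVert \log N \rVert_F \ge \lVert \log N_{11} \rVert_F$ via your injective pairing $\pi$ (using both ends of interlacing), whereas the paper proceeds slightly more directly by chaining the lower interlacing bound $\lambda_j(G) \le \lambda_j(G_{11})$ with majorization $\lambda_j(G_{11}) \le \lambda_j(D^{-1})$ to obtain termwise lower bounds on the first $m$ eigenvalue contributions and noting that the remaining $n-m$ terms are trivially nonnegative --- so your intermediate lemma, while correct, is a mild detour rather than a necessity.
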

We will defer the proof of Theorem~\ref{thm:delta}  to Section~\ref{sec:proof} but first make a few immediate observations regarding this new distance.

An implicit assumption in Theorem~\ref{thm:delta} is that whenever we write $\delta^+(A,B)$, we will require that the dimension of the matrix in the first argument be not more than the dimension of the matrix in the second argument. In particular, $\delta^+(A,B) \ne \delta^+(B, A)$; in fact the latter is not meaningful except in the case when $m = n$. An immediate conclusion is that $\delta_2^+$ does not define a \emph{metric} on $\bigcup_{n=1}^\infty \mathbb{S}^n_{++}$, which is not surprising as $\delta_2^+$ is a distance in the sense of a distance from a point to a set.

For the special case $m = n$, \eqref{eq:delta} becomes
\[
\delta^+_2(A, B) =\Bigl[\sum\nolimits_{j=1}^m \min\{0,\log \lambda_j(A^{-1}B)\}^2\Bigr]^{1/2}.
\]
However, since $m = n$, we may swap the matrices $A$ and $B$ in \eqref{eq:equal} to get
\[
\delta_2\bigl(B, \Omega_-(A)\bigr) = \delta_2\bigl(A,\Omega_+(B)\bigr)
\]
and their common value is given by
\[
\delta_2^{+}(B, A) = \Bigl[\sum\nolimits_{j=1}^m \min\{0,\log \lambda_j(B^{-1}A)\}^2\Bigr]^{1/2}.
\]
Note that even in this case, $\delta^+(A,B) \ne \delta^+(B, A)$ in general. Nevertheless, this gives us the relation between our original Riemannian distance $\delta_2$ and the distance $\delta_2^+$ defined in Theorem~\ref{thm:delta}.
\begin{proposition}\label{prop:equal}
Let $m = n$. Then the distances $\delta_2$ in \eqref{eq:delta2} and $\delta^+_2$ in \eqref{eq:delta} are related via
\[
\delta_2(A,B) = \delta_2^{+}(A,B) + \delta_2^+(B,A).
\]
\end{proposition}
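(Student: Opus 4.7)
My plan is to reduce to a diagonal case via simultaneous diagonalization and then express each of the three distances as a sum over sign-partitioned log-eigenvalues. First, apply Theorem~\ref{thm:simultaneous diagonalization} to obtain an invertible $X\in\mathbb{F}^{n\times n}$ with $XAX^{*}=I_{n}$ and $XBX^{*}=D=\diag(d_{1},\dots,d_{n})$, where $d_{j}=\lambda_{j}(A^{-1}B)$. Since $\delta_{2}$ is congruence invariant (as noted in the introduction) and $B\preceq H\iff XBX^{*}\preceq XHX^{*}$, the congruence $H\mapsto XHX^{*}$ is a bijection $\Omega_{-}(B)\to\Omega_{-}(D)$; the analogous statement holds for $\Omega_{+}$, since in the $m=n$ case the principal submatrix is the whole matrix. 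Hence $\delta_{2}(A,B)=\delta_{2}(I_{n},D)$, $\delta_{2}^{+}(A,B)=\delta_{2}^{+}(I_{n},D)$, and $\delta_{2}^{+}(B,A)=\delta_{2}^{+}(D,I_{n})$, and the identity reduces to the diagonal case.

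In that diagonal case, all three distances can be read off directly: since $I_{n}^{-1}D=D$ has eigenvalues $d_{j}$ and $D^{-1}I_{n}=D^{-1}$ has eigenvalues $1/d_{j}$, substitution into~\eqref{eq:delta2} and~\eqref{eq:delta} yields
\[
\delta_{2}(I_{n},D)=\Bigl(\sum_{j=1}^{n}\log^{2}d_{j}\Bigr)^{1/2},\quad \delta_{2}^{+}(I_{n},D)=\Bigl(\sum_{d_{j}\le 1}\log^{2}d_{j}\Bigr)^{1/2},\quad \delta_{2}^{+}(D,I_{n})=\Bigl(\sum_{d_{j}\ge 1}\log^{2}d_{j}\Bigr)^{1/2}.
\]
The two index sets $\{j:d_{j}\le 1\}$ and $\{j:d_{j}\ge 1\}$ partition $\{1,\dots,n\}$, with the $d_{j}=1$ entries contributing zero to both sums.

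With this reduction in hand, the proposition amounts to the scalar assertion $\lVert v\rVert_{2}=\lVert v_{-}\rVert_{2}+\lVert v_{+}\rVert_{2}$, where $v=(\log d_{1},\dots,\log d_{n})$ and $v_{-},v_{+}$ are its nonpositive- and nonnegative-coordinate projections. This is where I expect the main obstacle: the sign partition of the previous paragraph immediately gives only the Pythagorean relation $\lVert v\rVert_{2}^{2}=\lVert v_{-}\rVert_{2}^{2}+\lVert v_{+}\rVert_{2}^{2}$, while the additive form demands equality between an $\ell^{2}$ norm and a sum of two $\ell^{2}$ norms over a partition of its coordinates, which forces one of $v_{\pm}$ to vanish (equivalently, $A\preceq B$ or $B\preceq A$ in the Loewner order). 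Closing this gap is the crux of the argument; the natural routes are either to derive such Loewner comparability from the implicit setup so that one sign class is always empty, or to reduce the identity termwise to the scalar split $|\log d_{j}|=|\min\{0,\log d_{j}\}|+|\min\{0,-\log d_{j}\}|$ and aggregate in an $\ell^{1}$ rather than $\ell^{2}$ sense.
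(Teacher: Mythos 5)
Your reduction to the diagonal case is correct, and the ``obstacle'' you identify at the end is not a defect of your argument but of the statement itself: Proposition~\ref{prop:equal} is false as printed, and the paper offers no proof of it for you to compare against (it is asserted immediately after the discussion of the $m=n$ case). Your own formulas already contain the counterexample: take $A=I_2$ and $B=\diag(e,e^{-1})$, so $d_1=e$, $d_2=e^{-1}$; then $\delta_2(A,B)=\sqrt{2}$ while $\delta_2^{+}(A,B)=\delta_2^{+}(B,A)=1$, and the claimed right-hand side equals $2\neq\sqrt{2}$. What your sign partition actually establishes is the Pythagorean identity
\[
\delta_2(A,B)^2=\delta_2^{+}(A,B)^2+\delta_2^{+}(B,A)^2,
\]
which is presumably what was intended; the additive form holds exactly when one of the two summands vanishes, i.e.\ when all $\lambda_j(A^{-1}B)\ge 1$ or all $\le 1$, equivalently when $A\preceq B$ or $B\preceq A$.

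Neither of the two repair routes you float at the end can close the gap. Loewner comparability of $A$ and $B$ is not implied by any hypothesis of the proposition, which is asserted for arbitrary $A,B\in\mathbb{S}^n_{++}$. And the termwise scalar split $\lvert\log d_j\rvert=\lvert\min\{0,\log d_j\}\rvert+\lvert\min\{0,-\log d_j\}\rvert$ aggregated in $\ell^1$ computes $\sum_j\lvert\log d_j\rvert$, which is a different (Schatten--von Neumann type) quantity from the $\ell^2$-based $\delta_2$ of \eqref{eq:delta2}; it would prove an additive identity for that other distance, not for $\delta_2$. You should therefore stop at the squared identity and record the counterexample, rather than trying to force the stated equality.
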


The domain of $\delta_2^+$ may be further extended to positive semidefinite matrices in the following sense: Suppose $A \in \mathbb{S}^m_+$ and $B \in \mathbb{S}^n_+$ with $m \le n$. We may replace $\mathbb{S}^m_{++}$ by $\mathbb{S}^m_+$ in the \eqref{eq:omega+} and $\mathbb{S}^n_{++}$ by $\mathbb{S}^n_+$ in \eqref{eq:omega-}. If $A$ is singular, i.e., it is positive semidefinite but not positive definite, then we have 
\begin{equation}\label{eq:Asing}
\delta_2\bigl(A,\Omega_-(B)\bigr)  =\infty = \delta_2\bigl(B, \Omega_+(A)\bigr).
\end{equation}
as $\delta_2(A,H) = \infty$ for any $H\in \Omega_-(B)$ and $\delta_2(B,G) = \infty$ for any $G\in \Omega_+(A)$.
However, if $B$ is singular, then \eqref{eq:Asing} is not true unless $A$ is also singular. In general we only have
\[
\delta_2\bigl(A,\Omega_-(B)\bigr) \le \delta_2\bigl(B, \Omega_+(A)\bigr) = \infty,
\]
where the inequality can be strict  when $A$ is positive definite. In short, \eqref{eq:equal} extends to positive semidefinite $A$ and $B$ except in the case where $A$ is nonsingular and $B$ is singular.

\section{Proof of Theorem~\ref{thm:delta}}\label{sec:proof}

Throughout this section, we will assume that $m \le n$, $A \in \mathbb{S}^m_{++}$, and $B \in \mathbb{S}^n_{++}$. We will prove Theorem~\ref{thm:delta} by showing that
\begin{equation}\label{eq:delta-}
\delta_2\bigl(A,\Omega_-(B)\bigr) = \Bigl[\sum\nolimits_{j=1}^m \min\{0,\log \lambda_j(A^{-1}B_{11})\}^2\Bigr]^{1/2}
\end{equation}
in Lemma~\ref{lem:delta-} and 
\begin{equation}\label{eq:delta+}
\delta_2\bigl(B,\Omega_+(A)\bigr) = \Bigl[\sum\nolimits_{j=1}^m \min\{0,\log \lambda_j(A^{-1}B_{11})\}^2\Bigr]^{1/2}
\end{equation}
in Lemma~\ref{lem:delta+}. The key to establishing these is to repeatedly use the following invariance of $\delta_2$ under congruence action by nonsingular matrices.
\begin{lemma}[Invariance of $\delta_2$]\label{lem:invariance}
Let $A,B\in \mathbb{S}^n_{++}$ and $X\in \mathbb{F}^{n\times n}$ be nonsingular. Then 
\[
\delta_2(X A X^* ,X B X^* ) = \delta_2 (A,B).
\]
\end{lemma}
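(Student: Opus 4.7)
The plan is to unravel the definition \eqref{eq:delta2} of $\delta_2$ in both arguments and reduce the claim to the fact that similar matrices share the same spectrum. Specifically, I would first compute
\[
(XAX^*)^{-1}(XBX^*) = (X^*)^{-1} A^{-1} X^{-1} X B X^* = (X^*)^{-1}(A^{-1}B)X^*,
\]
using that $X$ (and hence $X^*$) is nonsingular so that $(XAX^*)^{-1} = (X^*)^{-1} A^{-1} X^{-1}$.

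Next I would observe that this identity exhibits $(XAX^*)^{-1}(XBX^*)$ as conjugate to $A^{-1}B$ by the invertible matrix $(X^*)^{-1}$. Similar matrices have identical eigenvalues (with multiplicity), so
\[
\lambda_j\bigl((XAX^*)^{-1}(XBX^*)\bigr) = \lambda_j(A^{-1}B), \qquad j = 1,\dots,n.
\]
Plugging this equality term-by-term into the sum defining $\delta_2$ in \eqref{eq:delta2} yields the desired invariance.

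There is essentially no obstacle here: once the algebraic identity for $(XAX^*)^{-1}(XBX^*)$ is written down, the conclusion is immediate from the spectral formula for $\delta_2$. The only minor point worth noting (but not requiring further work) is that although $A^{-1}B$ is generally not Hermitian, it is diagonalizable with positive real eigenvalues by Theorem~\ref{thm:simultaneous diagonalization}, so the logarithms appearing in \eqref{eq:delta2} are well-defined both before and after conjugation.
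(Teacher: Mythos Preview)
Your proof is correct and follows essentially the same approach as the paper: both reduce to the observation that the product $(XAX^*)^{-1}(XBX^*)$ (the paper uses the equivalent $(XAX^*)(XBX^*)^{-1}$) is similar to $A^{-1}B$ via an invertible matrix, hence has the same eigenvalues, and the formula \eqref{eq:delta2} depends only on these eigenvalues.
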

\begin{proof}
Observe that 
\[
(X A X^{* } )(X B X^{* })^{-1} = X (A B^{-1}) X^{-1}.
\]
Thus $\lambda_j (A B^{-1}) = \lambda_j ( (X A X^{* } )(X B X^{* })^{-1} )$ and the invariance of $\delta_2$ follows.
\end{proof}

\subsection{Calculating $\delta_2\bigl(A,\Omega_-(B)\bigr)$}\label{sec:delta-}
Recall that we partition  $B \in \mathbb{S}^n_{++}$ into $B = \begin{bsmallmatrix}
B_{11} & B_{12} \\
B_{12}^*  & B_{22}
\end{bsmallmatrix}$. Note that $B_{11}\in \mathbb{S}^m_{++}$, $B_{12} \in \mathbb{F}^{m \times (n-m)}$, and $B_{22} \in \mathbb{S}^{n-m}_{++}$. By Theorem~\ref{thm:simultaneous diagonalization}, there is a nonsingular $X\in \mathbb{F}^{m \times m}$ such that 
\[
X A X^*  = I_m,\qquad X B_{11} X^*  = D,
\]
where $D = \diag(\lambda_1,\dots,\lambda_m)$ with  $\lambda_j \coloneqq \lambda_j(A^{-1} B_{11})$, $j =1,\dots, m$. Since $B$ is positive definite, so is $B_{22}$, and thus there is a nonsingular $Y\in \mathbb{F}^{(n-m) \times (n-m)}$ such that 
\[
Y B_{22} Y^*  = I_{n-m}.
\]
Therefore, we have 
\[
\begin{bmatrix}
X & 0 \\
0 & Y
\end{bmatrix} \begin{bmatrix}
B_{11} & B_{12} \\ 
B_{12}^*   & B_{22}
\end{bmatrix} \begin{bmatrix}
X^*  & 0 \\
0 & Y^* 
\end{bmatrix} = \begin{bmatrix}
D & X B_{12} Y^*  \\
Y B_{12}^*  X^*  & I_{n-m}
\end{bmatrix}.
\]
Set $Z \coloneqq \begin{bsmallmatrix}
X & 0 \\
0 & Y
\end{bsmallmatrix}$. Then, by Lemma~\ref{lem:invariance},
\[
\delta_2 \bigl(A, \Omega_-(B)\bigr) = \delta_2 \bigl(X A X^* , X \Omega_-(B) X^* \bigr) = \delta_2 \bigl(I_m, \Omega_-(Z B Z^* )\bigr).
\]
Hence we may assume without loss of generality that  
\begin{equation}\label{eq:reduction of delta-2}
A = I_m,\qquad  B = \begin{bmatrix}
D & B_{12} \\
B_{12}^*  & I_{n-m}
\end{bmatrix},
\end{equation}
where  $D = \diag(\lambda_1,\dots,\lambda_m)$ and $B_{12}\in \mathbb{F}^{m \times (n-m)}$ is such that $B$ is positive definite.

We will need a small observation regarding the eigenvalues of $B$.
\begin{lemma}
Let $D = \diag(\lambda_1,\dots,\lambda_m)$. Let $\mu_{m+1},\dots, \mu_{n}$ be the eigenvalues of $B_{12}^*  D^{-1} B_{12}$. Then $0< \mu_{m+j} < 1$ for all $j=1,\dots,n-m$ and the eigenvalues of $B = \begin{bsmallmatrix}
D & B_{12} \\
B_{12}^*  & I_{n-m}
\end{bsmallmatrix}$ are $\lambda_1,\dots, \lambda_m,1- \mu_{m+1},\dots, 1-\mu_n$. 
\end{lemma}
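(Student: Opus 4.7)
The plan is to put $B$ in block-diagonal form up to congruence using the Schur complement of its $(1,1)$-block $D$. Since $D \succ 0$, we have the block $LDU$-factorization
\[
B = \begin{bmatrix} I_m & 0 \\ B_{12}^* D^{-1} & I_{n-m} \end{bmatrix}
\begin{bmatrix} D & 0 \\ 0 & I_{n-m} - B_{12}^* D^{-1} B_{12} \end{bmatrix}
\begin{bmatrix} I_m & D^{-1} B_{12} \\ 0 & I_{n-m} \end{bmatrix} = L \tilde{D} L^*,
\]
with $L$ nonsingular, so $B$ is congruent to the block-diagonal matrix $\tilde D$.

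The bounds $0 < \mu_{m+j} < 1$ fall out of the Schur complement positivity criterion applied to this decomposition. Because $B \succ 0$ and $L$ is nonsingular, $\tilde D$ is also positive definite, which forces $I_{n-m} - B_{12}^* D^{-1} B_{12} \succ 0$; Proposition~\ref{prop:partial order v.s. eigenvalues} then yields $\mu_{m+j} < 1$ for each $j$. The lower bound $\mu_{m+j} \ge 0$ is immediate, since $D^{-1} \succ 0$ makes $B_{12}^* D^{-1} B_{12}$ positive semidefinite.

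For the eigenvalue list, I would further diagonalize the Schur complement unitarily: choose a unitary $W$ with $W(I_{n-m} - B_{12}^* D^{-1} B_{12})W^* = \diag(1-\mu_{m+1},\dots,1-\mu_n)$, and set $N = \diag(I_m, W)\, L^{-1}$. Then $N$ is nonsingular and
\[
N B N^* = \diag(\lambda_1,\dots,\lambda_m,\, 1-\mu_{m+1},\dots,1-\mu_n),
\]
exhibiting $B$ as congruent to the diagonal matrix whose entries are exactly the values claimed by the lemma.

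I expect the main obstacle to be the step from this congruence to an identification with the literal spectrum of $B$, since congruence preserves signature and determinant but not the eigenvalue list. I would handle this either by invoking that the subsequent computation of $\delta_2(A,\Omega_-(B))$ and $\delta_2(B,\Omega_+(A))$ is congruence-invariant by Lemma~\ref{lem:invariance}, so only the diagonal form $\tilde D$ is needed in practice, or by a direct root computation for $\det(tI_n - B)$ via the block-determinant identity built on the same Schur complement.
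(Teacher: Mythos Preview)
Your Schur-complement factorization is precisely the mechanism the paper uses, and your instinct about the final step is correct: the obstacle you flag is not merely a gap to be filled but a genuine error in the lemma as stated. The paper records the identity as a \emph{similarity},
\[
\begin{bmatrix} I_m & 0 \\ -B_{12}^*D^{-1} & I_{n-m} \end{bmatrix} B \begin{bmatrix} I_m & 0 \\ -B_{12}^*D^{-1} & I_{n-m} \end{bmatrix}^{-1} = \begin{bmatrix} D & 0 \\ 0 & I_{n-m}-B_{12}^*D^{-1}B_{12} \end{bmatrix},
\]
but a direct block multiplication shows the left-hand side has $(1,1)$ block $D+B_{12}B_{12}^*D^{-1}$ and $(1,2)$ block $B_{12}$ whenever $B_{12}\ne 0$; only the \emph{congruence} $LBL^*$ equals the block-diagonal matrix, exactly as you wrote. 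Consequently the eigenvalue claim is false in general: for $m=1$, $n=2$, $D=[2]$, $B_{12}=[1]$ one has $\mu_2=\tfrac12$, so the lemma predicts eigenvalues $2$ and $\tfrac12$, whereas $B=\begin{bsmallmatrix}2&1\\1&1\end{bsmallmatrix}$ actually has eigenvalues $(3\pm\sqrt5)/2$. Your second proposed rescue, a direct computation of $\det(tI_n-B)$ via the block Schur identity, therefore cannot recover the stated list either.

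Your first proposed rescue is the correct resolution. The lemma is never invoked in the proofs of Lemmas~\ref{lem:delta-} and~\ref{lem:delta+}; those arguments depend only on the reduction \eqref{eq:reduction of delta-2} (which uses just $B_{11}=D$) and on the congruence-invariance of $\delta_2$ from Lemma~\ref{lem:invariance}, so the defect is harmless for the main theorem. One further minor point you already implicitly caught: the strict lower bound $\mu_{m+j}>0$ also fails in general (take $B_{12}=0$); only $\mu_{m+j}\ge 0$ holds, as you stated.
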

\begin{proof}
Since $I_{n-m} - B_{12}^*  D^{-1} B_{12}$ is the Schur complement of $D$ in the positive definite matrix $B$, it follows that $0 < \mu_{m+j} < 1$ for all $j=1,\dots, n-m$. The eigenvalues of $B$ are obvious from
\[
\begin{bmatrix}
I_m & 0 \\
-B_{12}^* D^{-1} & I_{m-n}
\end{bmatrix}   \begin{bmatrix}
D & B_{12} \\
B_{12}^*  & I_{n-m}
\end{bmatrix} 
\begin{bmatrix}
I_m & 0 \\
-B_{12}^* D^{-1} & I_{m-n}
\end{bmatrix}^{-1} = \begin{bmatrix}
D & 0 \\
0 &  I_{n-m} - B_{12}^*  D^{-1} B_{12} 
\end{bmatrix}.
\]\par \vspace{-1.1\baselineskip}
\qedhere
\end{proof}

We are now ready to prove  \eqref{eq:delta-}.
\begin{lemma}\label{lem:delta-}
Let $m\le n$ be positive integers and let $A \in \mathbb{S}^m_{++}$ and $B \in \mathbb{S}^n_{++}$. 
Then there exists an $H_0\in \mathbb{S}^m_{++}$ such that
\[
\delta_2\bigl(A,\Omega_-(B)\bigr) = \delta_2(A,H_0) = \Bigl[ \sum\nolimits_{j=1}^m \min\{0,\log \lambda_j \}^2 \Bigr]^{1/2}.
\]
\end{lemma}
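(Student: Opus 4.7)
The plan is to use Lemma~\ref{lem:invariance} to simultaneously diagonalize $A$ and $B_{11}$, reducing the constrained minimization to a coordinate-wise problem that can be solved by inspection. I would first invoke Theorem~\ref{thm:simultaneous diagonalization} to produce a nonsingular $X \in \mathbb{F}^{m \times m}$ with $XAX^* = I_m$ and $XB_{11}X^* = D \coloneqq \diag(\lambda_1, \ldots, \lambda_m)$, where $\lambda_j \coloneqq \lambda_j(A^{-1} B_{11})$. Since $H \mapsto XHX^*$ is an order-preserving bijection on $\mathbb{S}^m_{++}$, it transports $\Omega_-(B)$ to the set of $H' \in \mathbb{S}^m_{++}$ satisfying $H' \preceq D$, and by Lemma~\ref{lem:invariance} the problem reduces to evaluating
\[
\inf\Bigl\{\sum\nolimits_{j=1}^m \log^2 \lambda_j(H') : H' \in \mathbb{S}^m_{++},\ H' \preceq D\Bigr\}.
\]

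Next, I would apply majorization (Proposition~\ref{prop:partial order v.s. eigenvalues}) to the constraint $H' \preceq D$: it forces $\lambda_j(H') \le \lambda_j$ for every $j$. For each $j$ with $\lambda_j > 1$, the bound $\log^2 \lambda_j(H') \ge 0 = \min\{0, \log \lambda_j\}^2$ is trivial. For each $j$ with $\lambda_j \le 1$, both $\lambda_j(H')$ and $\lambda_j$ lie in $(0, 1]$, and $\lambda_j(H') \le \lambda_j$ forces $|\log \lambda_j(H')| \ge |\log \lambda_j|$, so $\log^2 \lambda_j(H') \ge \log^2 \lambda_j = \min\{0, \log \lambda_j\}^2$. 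Summing over $j$ yields $\delta_2(I_m, H')^2 \ge \sum_{j=1}^m \min\{0, \log \lambda_j\}^2$ for every feasible $H'$.

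To saturate this lower bound, I would exhibit the explicit witness $H_0 \coloneqq \diag(\min\{1, \lambda_1\}, \ldots, \min\{1, \lambda_m\})$. It is positive definite, and $D - H_0 = \diag(\max\{0, \lambda_j - 1\}) \succeq 0$, so $H_0 \preceq D$ is feasible. Since $\log \min\{1, \lambda_j\} = \min\{0, \log \lambda_j\}$, a direct computation gives $\delta_2(I_m, H_0)^2 = \sum_{j=1}^m \min\{0, \log \lambda_j\}^2$, matching the lower bound; pulling back via $X^{-1}(\cdot)X^{-*}$ produces the $H_0 \in \Omega_-(B)$ claimed in the statement. The main subtlety is keeping track of the correct direction of the inequality defining $\Omega_-(B)$ after the congruence: the term-wise analysis requires $\lambda_j(H')$ to be bounded \emph{above} by $\lambda_j$, which is precisely what produces the $\min\{0, \cdot\}$ form of the answer, in contrast to the dual $\Omega_+$-type problem where the analogous constraint would be a lower bound on $\lambda_j(G'_{11})$.
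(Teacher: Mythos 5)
There is a genuine sign error at the very first step of your argument, and it propagates through everything that follows. By definition, $\Omega_-(B) = \{H \in \mathbb{S}^m_{++} : B_{11} \preceq H\}$ --- the matrices that \emph{dominate} $B_{11}$, since $\mathcal{E}_H \subseteq \mathcal{E}_{B_{11}}$ is equivalent to $B_{11} \preceq H$ by Lemma~\ref{lem:ellipsoid-matrix1}. Under the congruence $H \mapsto XHX^*$ this constraint becomes $D \preceq H'$, not $H' \preceq D$ as you assert. Majorization therefore gives the \emph{lower} bounds $\lambda_j(H') \ge \lambda_j$, and the per-term infimum of $\log^2\lambda_j(H')$ is $\log^2\lambda_j$ when $\lambda_j > 1$ and $0$ when $\lambda_j \le 1$, attained by $H_0 = \diag(\max\{1,\lambda_1\},\dots,\max\{1,\lambda_m\})$, which does satisfy $D \preceq H_0$. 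This is exactly the paper's proof (your streamlined reduction that ignores $B_{12}$ and $B_{22}$ is fine, since $\Omega_-(B)$ depends only on $B_{11}$). Your proposed witness $\diag(\min\{1,\lambda_j\})$ is not even feasible: it satisfies the reversed inequality $H_0 \preceq D$ rather than $D \preceq H_0$.

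The confusing part is that your reversed computation lands exactly on the formula $\sum_j \min\{0,\log\lambda_j\}^2$ as printed in the lemma, whereas the correct computation gives $\sum_j \max\{0,\log\lambda_j\}^2$. The $\min$ in the statement appears to be a typo for $\max$: the paper's own case analysis keeps precisely the terms with $\lambda_j > 1$; the alternative expression in Theorem~\ref{thm:delta}, which sums $\log^2\lambda_j$ over the eigenvalues exceeding $1$, is the $\max$ form; and geometrically $\delta_2\bigl(A,\Omega_-(B)\bigr)$ must vanish exactly when $\iota_{m,n}(\mathcal{E}_A) \subseteq \mathcal{E}_B$, i.e.\ when $B_{11} \preceq A$, i.e.\ when all $\lambda_j(A^{-1}B_{11}) \le 1$ --- which again forces $\max$, not $\min$. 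So you have not proved the intended statement; you have reproduced the typographical error by making a compensating error in the direction of the partial order. If you rerun your argument with the correct constraint $D \preceq H'$ and the $\max$-form conclusion, it goes through essentially verbatim and coincides with the paper's proof.
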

\begin{proof}
By the preceding discussions, we may assume that $A$ and $B$ are as in \eqref{eq:reduction of delta-2}. So we must have 
\[\delta_2\bigl(A,\Omega_-(B)\bigr) =\inf_{D\preceq H}  \Bigl[ \sum\nolimits_{j=1}^m \log^2 \lambda_j (H)\Bigr]^{1/2}.
\]
The condition $D\preceq H$ implies that $\lambda_j \le \lambda_j (H)$,  $j =1,\dots, m$, by Proposition~\ref{prop:partial order v.s. eigenvalues}. Hence
\begin{equation}\label{eq:delta-}
\inf_{D\preceq H} \log^2  \lambda_j(H) = \begin{cases}
\log^2 \lambda_j &\text{if}\; \lambda_j > 1,\\
0 &\text{if}\; \lambda_j \le 1.
\end{cases}
\end{equation}
Let $H_0  =\diag( h_1,\dots, h_m)$ where
\[
h_j = \begin{cases}
\lambda_j &\text{if}\; \lambda_j > 1,\\
1 & \text{if}\; \lambda_j \le 1.
\end{cases}
\]
Then it is clear that  $D \preceq H_0$  and $H_0$ is our desired matrix by \eqref{eq:delta-}.
\end{proof}

\subsection{Calculating $\delta_2\bigl(B,\Omega_+(A)\bigr)$}\label{sec:delta+}

Let $A\in \mathbb{S}^m_{++}$ and $B\in \mathbb{S}^n_{++}$. Again, we partition $B$ as in Section~\ref{sec:delta-}. Let $L$ be the upper triangular matrix
\[
L = \begin{bmatrix}
I_m & 0 \\
-B_{12}^* B_{11}^{-1}  & I_{n-m}
\end{bmatrix}.
\]
Then
\[
L B L^*  = \begin{bmatrix}
B_{11} & 0 \\
0 & I_{n-m} - B_{12}^*  B_{11}^{-1} B_{12}
\end{bmatrix}\qquad \text{and}\qquad L \Omega_+(A) L^*  = \Omega_+(A).
\]
For the second equality, observe that $L \Omega_+ (A) L^*  \subseteq \Omega_+ (A)$ and check that $L^{-1} \Omega_+ (A) (L^{-1})^*  \subseteq \Omega_+ (A)$, which implies that $\Omega_+(A) \subseteq L \Omega_+ (A) L^* $. Therefore, by Lemma~\ref{lem:invariance}, we have
\begin{equation}\label{eq:reduction of delta+1}
\delta_2\bigl(B,\Omega_+(A)\bigr) = \delta_2\bigl(L B L^*, L \Omega_+(A) L^* \bigr) = \delta_2\bigl( LBL^*, \Omega_+(A) \bigr).
\end{equation}

Let $X_1\in \mathbb{F}^{m\times m}$ and $Y_1\in \mathbb{F}^{(n-m)\times (n-m)}$ be nonsingular matrices\footnote{We may take $X_1 = D^{-1/2}X$ where $X$ and $D$ are as in the beginning of Section~\ref{sec:delta-}. $X_1$ exists by Theorem~\ref{thm:simultaneous diagonalization} and $Y_1$ exists as $I_{n-m} - B_{12}^*  B_{11}^{-1} B_{12}$ is the Schur complement of $B_{11}$ in $B$, which is positive definite.} such that 
\[
X_1 A X_1^*  = D^{-1},\quad X_1 B_{11} X_{1}^{* } = I_{n-m}, \quad Y_1 (I_{n-m} - B_{12}^*  B_{11}^{-1} B_{12}) Y_1^*  = I_{n-m},
\]
where  $D = \diag(\lambda_1,\dots,\lambda_m)$ with  $\lambda_j \coloneqq \lambda_j(A^{-1} B_{11})$, $j =1,\dots, m$. Let $Z_1 = \begin{bsmallmatrix}
X_1 & 0 \\
0 & Y_1
\end{bsmallmatrix}$. Then
\[
Z_1 L B L^*  Z_1^*  = I_n \qquad \text{and}\qquad Z_1\Omega_+(A) Z_1^*  = \Omega_+(D^{-1}).
\]
Hence, by \eqref{eq:reduction of delta+1} and Lemma~\ref{lem:invariance},
\[
\delta_2\bigl(B,\Omega_+(A)\bigr) = \delta_2\bigl(LBL^*, \Omega_+(A)\bigr) = \delta_2\bigl( Z_1LBL^*  Z_1^*, Z_1 \Omega_+(A) Z_1^* \bigr) = \delta_2\bigl(I_n, \Omega_+(D^{-1})\bigr),
\]
So to calculate $\delta_2\bigl(B,\Omega_+(A)\bigr)$, it suffices to assume that
\begin{equation}\label{eq:reduction of delta+2}
A = D^{-1} = \diag(\lambda_1^{-1},\dots, \lambda_m^{-1}),\qquad B = I_n.
\end{equation}

We are now ready to prove  \eqref{eq:delta+}.
\begin{lemma}\label{lem:delta+}
Let $m\le n$ be positive integers and let $A \in \mathbb{S}^m_{++}$ and $B \in \mathbb{S}^n_{++}$. 
Then there exists some $G_0\in \mathbb{S}^n_{++}$ such that 
\[
\delta_2\bigl(B,\Omega_+(A)\bigr)= \delta_2(G_0,B) = \Bigl[\sum\nolimits_{j=1}^m  \min\{0,\log \lambda_j(A^{-1}B_{11})\}^2 \Bigr]^{1/2}.
\]
\end{lemma}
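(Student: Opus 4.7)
The plan is to mirror the strategy used in Lemma~\ref{lem:delta-}: first reduce to the canonical case $A = D^{-1}$, $B = I_n$ via the congruence invariance carried out in \eqref{eq:reduction of delta+2}, next derive a lower bound on $\sum_{j=1}^n \log^2\lambda_j(G)$ over $G \in \Omega_+(D^{-1})$ by combining majorization with Cauchy interlacing, and finally exhibit an explicit diagonal $G_0$ meeting that bound.

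After the reduction, the task is to minimize $\sum_{j=1}^n \log^2 \lambda_j(G)$ subject to $G \in \mathbb{S}^n_{++}$ and $G_{11} \preceq D^{-1} = \diag(\lambda_1^{-1},\ldots,\lambda_m^{-1})$. Proposition~\ref{prop:partial order v.s. eigenvalues} applied to $G_{11} \preceq D^{-1}$ yields $\lambda_j(G_{11}) \le \lambda_j(D^{-1})$ for $j=1,\dots,m$, and the eigenvalues of $D^{-1}$ are the reciprocals $1/\lambda_{m+1-j}$ in nondecreasing order. Cauchy interlacing (Theorem~\ref{thm:interlacing}) then gives $\lambda_j(G) \le \lambda_j(G_{11}) \le 1/\lambda_{m+1-j}$. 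The remaining $n-m$ largest eigenvalues of $G$ are unconstrained from below, so they contribute nothing to the infimum when set to $1$. Each of the first $m$ contributes $\log^2 \lambda_j(G) \ge \log^2(1/\lambda_{m+1-j}) = \log^2 \lambda_{m+1-j}$ whenever the cap $1/\lambda_{m+1-j}$ lies strictly below $1$, and $0$ otherwise. Reindexing by $i = m+1-j$, this lower bound agrees term-by-term with the expression claimed in \eqref{eq:delta+}.

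The second step is to produce a minimizer $G_0 \in \Omega_+(D^{-1})$. A natural candidate is the diagonal matrix $G_0 = \diag(g_1,\ldots,g_m, 1,\ldots,1)$ with $g_j = \min\{1, 1/\lambda_j\}$. A direct check shows $1/\lambda_j - g_j = \max\{0, 1/\lambda_j - 1\} \ge 0$, so $(G_0)_{11} \preceq D^{-1}$; and the eigenvalues of $G_0$ are the $g_j$'s together with $n-m$ ones, so only the indices with $\lambda_j > 1$ contribute to $\delta_2(I_n, G_0)^2$, each contributing exactly $\log^2 \lambda_j$. This matches the lower bound, so $G_0$ is an infimum-attaining matrix of the required form.

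The main obstacle I anticipate is purely organizational: correctly tracking the order-reversal under inversion (the $j \leftrightarrow m+1-j$ swap between the spectra of $D$ and $D^{-1}$) and confirming that Cauchy interlacing applied to the principal block $G_{11}$ constrains exactly the smallest $m$ eigenvalues of $G$, which is precisely what leaves the remaining $n-m$ eigenvalues of $G_0$ free to be set to $1$ without violating the block inequality.
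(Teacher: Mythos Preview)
Your proposal is correct and follows essentially the same route as the paper: reduce to $A=D^{-1}$, $B=I_n$ via congruence invariance, bound the first $m$ eigenvalues of $G$ using majorization together with Cauchy interlacing, and then exhibit the same diagonal minimizer $G_0=\diag(g_1,\dots,g_m,1,\dots,1)$ with $g_j=\min\{1,\lambda_j^{-1}\}$. The only difference is cosmetic: you explicitly track the order reversal $\lambda_j(D^{-1})=\lambda_{m+1-j}^{-1}$, whereas the paper writes $\lambda_j(G_{11})\le \lambda_j^{-1}$ without comment and relies on the final sum over all $j$ being insensitive to this reindexing.
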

\begin{proof}
By the preceding discussions, we may assume that $A$ and $B$ are as in \eqref{eq:reduction of delta+2}. 
So we must have 
\[
\delta_2\bigl(I_n, \Omega_+(D^{-1})\bigr) =\inf_{G_{11} \preceq D^{-1}} \Bigl[\sum\nolimits_{j=1}^n \log^2 \lambda_j(G)\Bigr]^{1/2},
\]
where $G_{11}$ is the upper left $m\times m$ principal submatrix of $G\in \Omega_+(D^{-1})$. By Proposition~\ref{prop:partial order v.s. eigenvalues}, we have $\lambda_j(G_{11}) \le \lambda_j^{-1}$, $j=1,\dots,m$. Moreover, by Theorem~\ref{thm:interlacing},
\[
\lambda_j(G) \le \lambda_j(G_{11}) \le \lambda_j^{-1},\quad j=1,\dots, m.
\]
Therefore, for each $j=1,\dots, m$,
\begin{equation}\label{eq:delta+}
\inf_{G_{11} \preceq D^{-1}} \log^2 \lambda_j(G) = \begin{cases}
\log^2 \lambda_j & \text{if}\; \lambda_j > 1, \\
0 & \text{if}\; \lambda_j \le 1,
\end{cases}
\end{equation}
and for each $j=m+1,\dots, n$,
\[
\inf_{G_{11} \preceq D^{-1}} \log^2 \lambda_j (G) = 0.
\]
Let $G_0 = \diag(g_1,\dots, g_n)$ where 
\[
g_j = \begin{cases}
\lambda_j^{-1} &\text{if}\; \lambda_j > 1 \; \text{and}\; j = 1,\dots,m, \\
1 & \text{otherwise}.
\end{cases}
\]
Then it is clear that $(G_0)_{11} \preceq D^{-1}$  and  $G_0$ is our desired matrix by \eqref{eq:delta+}.
\end{proof}

\section*{Acknowledgment}

This work came from an impromptu discussion of the first two authors at the workshop  on ``Nonlinear Data: Theory and Algorithms,'' where they met for the first time. LHL and RS gratefully acknowledge the workshop organizers and the  Mathematical Research Institute of Oberwolfach  for hosting the wonderful event.

The research leading to these results has received funding from the Defense Advanced Research Projects Agency under the DARPA Grant D15AP00109, the European Research Council under the Advanced ERC Grant Agreement Switchlet n.670645, the National Natural Science Foundation of China under the NSFC Grant no.~11688101, and the National Science Foundation under the NSF Grant IIS 1546413. In addition, LHL's work is  supported by a DARPA Director's Fellowship and  the Eckhardt Faculty Fund from the University of Chicago; KY's work is supported by the Hundred Talents Program of the Chinese Academy of Sciences as well as  the Thousand Talents Plan of the State Council of China.

\bibliographystyle{abbrv}

\end{document}